\newtheorem{theorem}{Theorem}
\newtheorem*{theorem*}{Theorem}
\newtheorem{proposition}{Proposition}
\newtheorem{lemma}{Lemma}
\theoremstyle{remark}
\newtheorem{remark}{Remark}
\newtheorem{example}{Example}
\newcommand{\abs}[1]{\left\lvert#1\right\rvert}
\newcommand{\R}{\mathbb{R}}
\newcommand{\B}{\mathcal{B}}
\newcommand{\disc}{\mathbb{D}}
\newcommand{\C}{\mathbb{C}}
\newcommand{\n}{\mathbb{N}}
\newcommand{\zb}{\overline{z}}
\newcommand{\D}{\Omega}
\providecommand{\MR}{\relax\ifhmode\unskip\space\fi MR }
\providecommand{\href}[2]{#2}
\title[Nilpotent Toeplitz Operators on Reinhardt domains]{Nilpotent Toeplitz Operators on Reinhardt Domains}
\author{Mehmet \c{C}el\.ik}
\address[Mehmet \c{C}elik]{University of North Texas at Dallas, Department of
Mathematics and Information Sciences,  7400 University Hills Blvd., 
Dallas, TX 75241}
\email{mehmet.celik@untdallas.edu}
\author{Yunus E. Zeytuncu}
\address[Yunus E. Zeytuncu]{University of Michigan - Dearborn, Department of Mathematics and Statistics, Dearborn, MI  48128}
\email{zeytuncu@umich.edu}
\subjclass[2000]{Primary 47B35; Secondary 32A36}
\keywords{Toeplitz operator; Bergman space; Nilpotent Operator; Reinhardt Domain}
\date{\today}
\begin{document}
\maketitle

\begin{abstract}
We construct explicit examples of non-trivial nilpotent Toeplitz operators on Bergman spaces of certain Reinhardt domains in $\mathbb{C}^2$.

\end{abstract}

\section{Introduction}

\subsection{Set-up and Result}
Let $\D$ be a domain in $\C^n$ and $A^2(\D)$ denote the Bergman space of $\D$.
The Bergman projection operator, $\B_{\D}$, is the orthogonal projection from $L^2(\D)$ onto $A^{2}(\D)$. It is an integral operator with the kernel called the Bergman kernel, denoted by $B_{\D}(z,w)$. If $\{e_n(z)\}_{n=0}^{\infty}$ is an orthonormal basis for $A^2(\D)$ then the Bergman kernel can be represented as 
$B_{\D}(z,w)=\sum\limits_{n=0}^{\infty}e_n(z)\overline{e_n(w)}$. See \cite{Krantz2001} for general theory of Bergman spaces.

For a function $u$ on $\D$, the Toeplitz operator $T_u: A^2(\D)\rightarrow  A^2(\D)$ with the symbol $u$ is defined by $T_u(f)=\B_{\Omega}(uf)$.

In this note, we are interested in the zero product problem. For two symbols $u_1$ and $u_2$, if the product $T_{u_1} T_{u_2}$ is identically zero on $A^2(\D)$ then can we claim  $T_{u_1}$ or $T_{u_2}$ is identically zero? This is a non-trivial problem and the answer is not even known when $\D$ is the unit disc.

Here, we indicate the problem has a different flavor in higher dimensions. In particular, we present a family of Reinhardt domains in $\C^2$ on which not only zero products of non-trivial Bergman space Toeplitz operators exist but we can find nilpotent Toeplitz operators.
 
\begin{theorem}\label{theorem}
There exists Reinhardt domains in $\C^2$ on whose Bergman spaces there are nilpotent Toeplitz operators.
\end{theorem}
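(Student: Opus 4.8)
The plan is to exploit the fact that on a Reinhardt domain $\D\subseteq\C^2$ the square-integrable monomials $z_1^{j}z_2^{k}$ form a complete orthogonal system in $A^2(\D)$, so that a Toeplitz operator whose symbol is itself a monomial acts on this basis as a weighted shift of the exponent lattice. Concretely, writing $u=z_1^{p}\bar z_1^{q}z_2^{r}\bar z_2^{s}$ and using the rotation invariance $(z_1,z_2)\mapsto(e^{i\theta_1}z_1,e^{i\theta_2}z_2)$ of $\D$, the inner product $\langle u\,z_1^{j}z_2^{k},\,z_1^{a}z_2^{b}\rangle$ vanishes unless $(a,b)=(j,k)+v$ with $v:=(p-q,\,r-s)$. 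Hence, letting $S:=\{(j,k)\in\Z^2:\ z_1^{j}z_2^{k}\in A^2(\D)\}$ be the admissible exponent set, one gets $T_u(z_1^{j}z_2^{k})=c_{j,k}\,z_1^{j+p-q}z_2^{k+r-s}$ when $(j,k)+v\in S$ and $T_u(z_1^{j}z_2^{k})=0$ otherwise, each coefficient $c_{j,k}$ being a strictly positive number. The first step is to record these two facts (orthogonality and completeness of monomials, together with the shift formula and positivity of the coefficients) precisely.

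With this reduction in hand, the key observation is that $T_u^N$ sends $z_1^{j}z_2^{k}$ to a positive multiple of $z_1^{j+N(p-q)}z_2^{k+N(r-s)}$ precisely when the whole arithmetic progression $(j,k),(j,k)+v,\dots,(j,k)+Nv$ stays inside $S$, and to $0$ as soon as the progression leaves $S$. Thus $T_u$ is nilpotent if and only if $S$ contains no arbitrarily long arithmetic progression with common difference $v$; and, since $S$ is convex (the integral defining $\|z_1^{j}z_2^{k}\|^2$ is log-convex in the exponents), a uniform bound holds exactly when $S$ is confined to a strip transverse to $v$. So the problem reduces to producing a Reinhardt domain whose admissible set $S$ lies between two parallel lines crossing the direction $v$.

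The concrete construction I would use is an \emph{unbounded} Reinhardt domain that pinches onto the curve $|z_1z_2|=1$: in $(r_1,r_2)=(|z_1|,|z_2|)$ variables its radial profile is a neighbourhood of $\{r_1r_2=1\}$ whose thickness decays exponentially as $r_1/r_2\to0,\infty$. Passing to $p=\log(r_1r_2)$ and $q=\log(r_1/r_2)$ turns the norm into $\|z_1^{j}z_2^{k}\|^2\asymp\int e^{(j-k)\,q}\,h(q)\,dq$ with a weight $h(q)\asymp e^{-M|q|}$ coming from the decaying thickness; this integral is finite exactly when $|j-k|<M$, so $S=\{(j,k)\in\Z^2:\ |j-k|<M\}$ is a strip (note $z_1,z_2\neq0$ on $\D$, so negative exponents are admissible and $A^2(\D)$ is infinite dimensional). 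Choosing $u=z_1\bar z_2$ gives $v=(1,-1)$, transverse to the strip, and since $|z_1z_2|$ is bounded on $\D$ this is a \emph{bounded} symbol; a progression of step $v$ changes $j-k$ by $2$ at each stage, so it meets $S$ in at most $\lceil M\rceil+1$ points. Therefore $T_{z_1\bar z_2}^{\lceil M\rceil+1}=0$ while $T_{z_1\bar z_2}\neq0$ as soon as the strip contains two admissible anti-diagonals, which proves the theorem.

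The main obstacle is the third step: arranging the domain so that $S$ is genuinely a strip rather than a cone. Polyhedral (log-linear) radial profiles always yield exponent sets that are translated cones, whose cross-sections in every direction grow without bound and so produce operators that are killed on each vector eventually but with no uniform power; to force a two-sided pair of \emph{parallel} bounding lines one must use a curved boundary at infinity, which is exactly the purpose of the exponential pinching. The real work is thus the integrability computation establishing the weight $h(q)\asymp e^{-M|q|}$ and the resulting two-sided bound $|j-k|<M$, together with verifying that the pinched region is a bona fide open connected Reinhardt domain whose monomials remain a complete orthogonal basis of $A^2(\D)$.
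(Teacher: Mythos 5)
Your proposal is correct, and its engine is the same as the paper's: force the set $S$ of square-integrable monomial exponents to be a strip of finite width, then take a torus-equivariant symbol whose Toeplitz operator acts as a weighted shift on exponents in a direction transverse to the strip, so every monomial is annihilated after a uniformly bounded number of applications and nilpotency follows from completeness of the monomials plus boundedness of the operator. The differences are in the concrete realization. The paper uses $\D_m=X\cup Y_m\cup Z$, which contains the origin, so $S=R_m=\{\alpha\in\n^2:\alpha_1\le\alpha_2<\alpha_1+\tfrac{m-1}{2}\}$ is a half-strip of Taylor exponents, and the unimodular symbol $\phi=z_1/\zb_1$ with shift $(2,0)$; you use a single tube pinching exponentially onto $\{|z_1z_2|=1\}$, which avoids both coordinate hyperplanes, so $S=\{(j,k)\in\Z^2:|j-k|<M\}$ is a two-sided strip of Laurent exponents, and the bounded symbol $z_1\zb_2$ with shift $(1,-1)$. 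In logarithmic coordinates the two constructions nearly coincide: the paper's piece $Y_m$, given by $\left||z_1|-1/|z_2|\right|<|z_2|^{-m}$, is exactly an exponentially thin neighborhood of the anti-diagonal line and is what produces the finite-width side of $R_m$, while the paper's $X$ and $Z$ supply the other side and the origin; your exponential pinching on both ends does the two jobs at once. What your variant buys: boundedness of $T_{z_1\zb_2}$ is immediate since $|z_1\zb_2|=|z_1z_2|\le e$ on your domain, and the symmetric strip makes the nilpotency count transparent. What it costs: since your domain misses $\{z_1=0\}\cup\{z_2=0\}$, you must invoke the Laurent-series version of the statement that the square-integrable monomials $z^\alpha$, $\alpha\in\Z^2$, form a complete orthogonal system in $A^2$ of a Reinhardt domain (the fact the paper quotes is only for domains containing the origin); this is standard (via Parseval on the torus orbits) but is a step you correctly flag and would need to write out, along with fixing a concrete $M>1$ so that $T_{z_1\zb_2}\neq 0$. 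With those details supplied, your argument is a complete and valid proof of the theorem, essentially a sibling of the paper's example within the family the authors allude to in their remark.
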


\begin{remark}
It becomes clear in the proof that the operators in Theorem \ref{theorem} are also of infinite rank.
\end{remark}

\subsection{History:} The zero product problem on the Hardy space is initiated in \cite{BrownHalmos63-64}. It is completely solved in \cite{AlemanVukotic2009} where authors established that the product of non-zero Toeplitz operators is never zero. For the intermediate results, before the complete solution, see \cite{Guo1996,Gu2000} and the references in \cite{AlemanVukotic2009}.

In \cite{AhernCuckovic2001-1}, it is shown that for the Toeplitz operators on the Bergman space $A^2(\Bbb D)$ of the unit disc $\Bbb D$, the analogue of the Brown-Halmos theorem holds under some additional hypothesis that $u$ and $v$ are bounded and harmonic. Later, the same result is proven for radial symbols in \cite{AhernCuckovic2001-2}. The problem on $\disc$, without extra assumptions on the symbols, remains open. 

The higher dimensional cases are studied in \cite{ChoeKoo2006,ChoeLeeNamZheng2007,ChoeKooLee2007} where the results on the unit disc are extended to the ball or to the polydisk. In these papers, neither non-trivial zero products nor nilpotent Toeplitz operators are observed.

In \cite{BauerLe2011}, the problem is considered on the Segal-Bargmann space (the space of square integrable entire functions on $\C^n$ with a Gaussian decay weight) and an example of a non-trivial zero product of three Toeplitz operators is constructed. However, no nilpotent Toeplitz operator is observed.

\section{Proof of the Theorem}

Inspired by the construction in \cite{Wiegerinck84}, we define the following family of domains $\D_m$ in $\mathbb{C}^2$. 
\begin{align*}
X&=\left\{(z_1,z_2)\in\C^2\ : |z_1|>e,\ \ |z_2|<\frac{1}{|z_1|\log|z_1|}\right\}\\
Y_{m}&=\left\{(z_1,z_2)\in\C^2\ : |z_2|>2,\ \ \left||z_1|-\frac{1}{|z_2|}\right|<\frac{1}{|z_2|^m}\right\}\\
Z&=\left\{(z_1,z_2)\in\C^2\ : |z_1|\leq e,\ \ |z_2|\leq 2\right\}
\end{align*}
and put 
\[\D_m=X\cup Y_m\cup Z, \ \ \ m=1,2,\ldots.\]

Each $\D_m$ is an unbounded Reinhardt domain with finite volume, see Figure \ref{Fig1}. 
\begin{figure}[ht!]
\centering
\includegraphics[width=80mm]{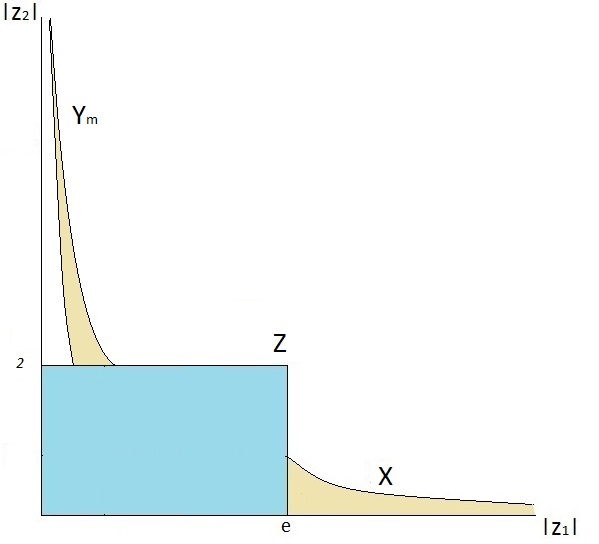}
\caption{Representation of $\Omega_m$ in absolute space $\{(r_1,r_2)\in\R^2\ |\ r_1\geq 0\ \text{ and }\ r_2\geq 0\}$, under the map $\tau:(z_1,z_2)\rightarrow(|z_1|,|z_2|)$.}
\label{Fig1}
\end{figure}

\begin{lemma}\label{the lemma}
For a multi-index $\alpha=(\alpha_1,\alpha_2)$, the monomial $z^{\alpha}$ is in $A^2(\D_m)$ 
 if and only if $\alpha_2\geq\alpha_1>\alpha_2-\frac{m-1}{2}$.
\end{lemma}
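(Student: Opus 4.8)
The plan is to exploit the Reinhardt symmetry of $\D_m$. Since $\D_m$ contains a full neighborhood of the origin (sitting inside $Z$), the monomial $z^\alpha$ extends holomorphically across $z_1=0$ and $z_2=0$ only when $\alpha_1,\alpha_2\geq 0$, which is in any case forced by $\alpha$ being a multi-index; membership in $A^2(\D_m)$ is then equivalent to finiteness of $\int_{\D_m}|z^\alpha|^2\,dV$. Passing to absolute coordinates $r_j=|z_j|$ via $\tau$ and integrating out the two angular variables reduces the problem to deciding whether
\[
I(\alpha):=\int_{\tau(\D_m)} r_1^{\,2\alpha_1+1}\,r_2^{\,2\alpha_2+1}\,dr_1\,dr_2
\]
is finite. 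Because $\tau(Z)$ is bounded, its contribution is always finite, so $I(\alpha)<\infty$ if and only if the integrals over $\tau(X)$ and $\tau(Y_m)$ are both finite; I will treat these two unbounded pieces separately, and each will produce one of the two inequalities.

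For the piece over $X$ I would integrate first in $r_2$ over $0<r_2<\frac{1}{r_1\log r_1}$, which (using $\alpha_2\geq 0$) produces a factor proportional to $\bigl(r_1\log r_1\bigr)^{-(2\alpha_2+2)}$, leaving
\[
\int_e^\infty \frac{r_1^{\,2(\alpha_1-\alpha_2)-1}}{(\log r_1)^{2\alpha_2+2}}\,dr_1 .
\]
A standard comparison shows this converges exactly when $\alpha_1\leq\alpha_2$: for $\alpha_1<\alpha_2$ the exponent of $r_1$ is below $-1$ and already forces convergence, while in the borderline case $\alpha_1=\alpha_2$ the substitution $u=\log r_1$ turns the integral into $\int_1^\infty u^{-(2\alpha_2+2)}\,du$, which is finite since $2\alpha_2+2>1$. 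This yields the left inequality $\alpha_2\geq\alpha_1$.

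For the piece over $Y_m$ I would integrate first in $r_1$ across the thin shell $\bigl|r_1-\tfrac{1}{r_2}\bigr|<\tfrac{1}{r_2^m}$. Writing the endpoints as $\tfrac{1}{r_2}(1\pm t)$ with $t=r_2^{-(m-1)}$, the inner integral equals a constant times $r_2^{-(2\alpha_1+2)}\bigl[(1+t)^{2\alpha_1+2}-(1-t)^{2\alpha_1+2}\bigr]$. As $r_2\to\infty$ the bracket is comparable to $t=r_2^{-(m-1)}$, so the inner integral behaves like $r_2^{-(2\alpha_1+m+1)}$; the remaining integral $\int_2^\infty r_2^{\,2\alpha_2-2\alpha_1-m}\,dr_2$ then converges precisely when $2\alpha_2-2\alpha_1-m<-1$, i.e. $\alpha_1>\alpha_2-\tfrac{m-1}{2}$, the right inequality. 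Combining the two regimes gives $\alpha_2\geq\alpha_1>\alpha_2-\tfrac{m-1}{2}$, as claimed.

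The one delicate point I expect to be the main obstacle is the two-sided estimate of the bracket $(1+t)^{2\alpha_1+2}-(1-t)^{2\alpha_1+2}$, uniformly in $r_2>2$, so that the asymptotics of the $Y_m$-integral are genuinely pinned down rather than merely bounded on one side. Here the case $m=1$, in which $t\equiv 1$ and the shell reaches $r_1=0$, is the borderline situation that must be checked directly; it still matches the general formula, since there the bracket is a nonzero constant, comparable to $r_2^{-(m-1)}=1$. Everything else is routine convergence analysis of elementary integrals.
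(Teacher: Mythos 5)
Your proposal is correct and follows essentially the same route as the paper's proof: decompose $\D_m$ into $X$, $Y_m$, and the bounded polydisc $Z$, integrate out the angular variables, and extract the condition $\alpha_2\geq\alpha_1$ from the $X$-integral and $\alpha_1>\alpha_2-\frac{m-1}{2}$ from the $Y_m$-integral. If anything, your treatment of the borderline cases (the case $\alpha_1=\alpha_2$ via the substitution $u=\log r_1$, and the uniform two-sided comparability of the bracket $(1+t)^{2\alpha_1+2}-(1-t)^{2\alpha_1+2}$ with $t$, including $m=1$) is slightly more careful than the paper's, which cites a convergence criterion for $\int_e^\infty x^{-m}(\log x)^{-k}\,dx$ that is imprecise at $m=1$ and uses an unjustified ``dominant term'' approximation for the $Y_m$-integral.
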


\begin{proof}
We start with the calculation on the domain $X$.
\begin{align*}
\int\limits_{X}\abs{z^{\alpha}}^2dV(z)
&=\int\limits_{\abs{z_1}>e}\abs{z_1}^{2\alpha_1}\int\limits_{|z_2|<\frac{1}{|z_1|\log|z_1|}}\abs{z_{2}}^{2\alpha_2}dA(z_{2})dA(z_{1})\\
&=4\pi^2\int\limits_{e}^{\infty} r_1^{2\alpha_1+1}\int\limits_{0}^{\frac{1}{r_1 \log(r_1)}} r_{2}^{2\alpha_2+1}dr_2 dr_1
=\frac{4\pi^{2}}{2\alpha_2+2}\int\limits_{e}^{\infty} \frac{r_1^{2\alpha_1+1}}{r_1^{2\alpha_2+2}(\log(r_1))^{2\alpha_2+2}} dr_1.
\end{align*}
We note that for $k>0$, the improper integral $\int_{e}^{\infty}\frac{1}{x^m(\log x)^k}dx$ converges if and only if $m\geq 1$.
Therefore, the last integral above (where $k=2\alpha_2+2>0$ and $m=2(\alpha_2-\alpha_1)+1$) is finite if and only if $(\alpha_2- \alpha_1)\geq 0$. In other words
\begin{align}\label{condition1}
z^{\alpha}\in A^2(X) \ \ \Leftrightarrow\ \ \alpha_2\geq \alpha_1.
\end{align}

We continue with the calculation on the domain $Y_m$.

\begin{align*}
\int\limits_{Y_m}\abs{z^{\alpha}}^{2}dV(z)
&=\int\limits_{\abs{z_2}>2}\abs{z_2}^{2\alpha_2}\int\limits_{\frac{1}{|z_2|}-\frac{1}{|z_2|^m}<|z_1|<\frac{1}{|z_2|}+\frac{1}{|z_2|^m}}\abs{z_1}^{2\alpha_1}dA(z_{2})dA(z_{1})\\
&=4\pi^2\int\limits_{2}^{\infty} r_2^{2\alpha_2+1}\int\limits_{\frac{1}{r_2}-\frac{1}{r_2^m}}^{\frac{1}{r_2}+\frac{1}{r_2^m}} r_{1}^{2\alpha_1+1}dr_1 dr_2\\
&=\frac{4\pi^2}{2\alpha_1+2}\int\limits_{2}^{\infty} r_2^{2\alpha_2+1}\left[\left(\frac{1}{r_2}-\frac{1}{r_2^m}\right)^{2\alpha_1+2}-\left(\frac{1}{r_2}+\frac{1}{r_2^m}\right)^{2\alpha_1+2}\right]dr_2.
\end{align*}
Since $r_2>2$, after using the binomial expansion in the brackets we consider the term $1/r_2$ with the smallest degree as the dominant, which is $1/r_2^{2\alpha_1+1+m}$.  The last integral can be estimated by
\begin{align*}
\int\limits_{Y_m}\abs{z^{\alpha}}^{2}dV(z) \approx \int\limits_{2}^{\infty} r_2^{2\alpha_2+1} \frac{1}{r_2^{2\alpha_1+1+m}}dr_2.
\end{align*}
The integral on the right is finite if and only if $\alpha_1 > \alpha_2+\frac{1-m}{2}$. In other words
\begin{align}\label{condition2}
z^{\alpha}\in A^2(Y_m) \ \ \Leftrightarrow\ \ \alpha_1 > \alpha_2+\frac{1-m}{2}.
\end{align}
The lemma follows from \eqref{condition1} and \eqref{condition2}.

\end{proof}

Next, we set $m\geq 6$, $\phi=z_1/\zb_1$ and consider $T_{\phi}$ on $A^2(\D_m)$.
\begin{proposition}\label{proposition}
The following properties hold.
\begin{itemize}
\item[$(i)$] $T_{\phi}$ is not a zero operator.
\item[$(ii)$] $T_{\phi}$ does not have finite rank.
\item[$(iii)$] $T_{\phi}$ is a bounded operator.
\item[$(iv)$] $T_{\phi}$ is a nilpotent operator of degree $\lfloor \frac{m}{4}\rfloor$, the largest integer less than or equal to $\frac{m}{4}$.
\end{itemize}
\end{proposition}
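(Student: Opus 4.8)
The plan is to diagonalize the action of $T_\phi$ on the monomial basis and read off all four properties from that description. Since $\Omega_m$ is Reinhardt, distinct monomials $z^\alpha$ are mutually orthogonal in $A^2(\Omega_m)$, and by Lemma \ref{the lemma} the admissible ones, $\mathcal{A}=\{\alpha\in\mathbb{Z}_{\ge0}^2 : \alpha_2\ge\alpha_1>\alpha_2-\frac{m-1}{2}\}$, form an orthogonal basis. Writing $z_j=r_je^{i\theta_j}$, we have $\phi=z_1/\overline{z}_1=e^{2i\theta_1}$, so $\phi z^\alpha$ carries the single angular character $e^{i(\alpha_1+2)\theta_1}e^{i\alpha_2\theta_2}$. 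Because the rotational invariance of $\Omega_m$ makes these characters orthogonal, $\langle \phi z^\alpha, z^\gamma\rangle$ vanishes unless $\gamma=(\alpha_1+2,\alpha_2)$, and the Bergman projection collapses to a single term,
\[
T_\phi z^\alpha=\B_{\Omega_m}(\phi z^\alpha)=c_\alpha\, z_1^{\alpha_1+2}z_2^{\alpha_2}, \qquad c_\alpha=\frac{\norm{z_1^{\alpha_1+1}z_2^{\alpha_2}}^2}{\norm{z_1^{\alpha_1+2}z_2^{\alpha_2}}^2},
\]
whenever $(\alpha_1+2,\alpha_2)\in\mathcal{A}$, and $T_\phi z^\alpha=0$ otherwise. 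The constant $c_\alpha$ is a ratio of integrals of positive functions (the numerator finite by Cauchy--Schwarz), so $c_\alpha>0$ exactly when the target monomial is admissible. Thus $T_\phi$ is a weighted shift carrying the lattice point $(\alpha_1,\alpha_2)$ to $(\alpha_1+2,\alpha_2)$, i.e. decreasing $d:=\alpha_2-\alpha_1$ by $2$ while fixing $\alpha_2$.

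Given this description, $(i)$--$(iii)$ are immediate. For $(iii)$, $\abs{\phi}\equiv1$ gives $\norm{T_\phi f}\le\norm{\phi f}=\norm{f}$, so $\norm{T_\phi}\le1$. For $(i)$ and $(ii)$, since $m\ge6$ forces the admissible strip to have width $\frac{m-1}{2}>2$, both the index $(\alpha_2-2,\alpha_2)$ and its image $(\alpha_2,\alpha_2)$ are admissible for every $\alpha_2\ge2$; hence $T_\phi(z_1^{\alpha_2-2}z_2^{\alpha_2})=c\, z_1^{\alpha_2}z_2^{\alpha_2}\ne0$, and as $\alpha_2$ varies these images are infinitely many mutually orthogonal vectors in the range, so $T_\phi$ is non-zero and of infinite rank.

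For the nilpotency in $(iv)$ I iterate: $T_\phi^{k}z^\alpha$ is a positive multiple of $z_1^{\alpha_1+2k}z_2^{\alpha_2}$ when that monomial is admissible and is $0$ otherwise, so $T_\phi^{k}z^\alpha\ne0$ precisely when $\alpha_1+2k\le\alpha_2$, i.e. $d\ge2k$. Since every admissible $\alpha$ satisfies $d<\frac{m-1}{2}$, the exponent cannot survive indefinitely and $T_\phi^{N}=0$ once $2N$ exceeds the largest admissible value of $d$. The nilpotency degree is therefore the length of the longest chain $z^\alpha\to z_1^{\alpha_1+2}z_2^{\alpha_2}\to\cdots$ staying inside the strip $\alpha_2-\frac{m-1}{2}<\alpha_1\le\alpha_2$, and counting admissible lattice points along an arithmetic progression of step $2$ inside a strip of width $\frac{m-1}{2}$ yields the stated degree $\lfloor m/4\rfloor$.

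The essential step, and the only one needing genuine care, is the projection identity in the first paragraph: one must justify that the Bergman projection of the non-holomorphic function $\phi z^\alpha$ reduces to a single monomial. This rests on expanding $\B_{\Omega_m}$ against the orthogonal monomial basis and invoking the rotational invariance of $\Omega_m$ to annihilate all but one Fourier mode, together with the verification that the surviving coefficient $c_\alpha$ is finite and strictly positive. Once that is in place, $(i)$--$(iv)$ reduce to bookkeeping on the lattice $\mathcal{A}$, with the only mildly delicate remaining point being the exact floor count that produces the degree $\lfloor m/4\rfloor$.
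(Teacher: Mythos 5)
Your proof follows the same skeleton as the paper's: reduce $T_\phi$ to a weighted shift $(\alpha_1,\alpha_2)\mapsto(\alpha_1+2,\alpha_2)$ on the lattice of admissible exponents via orthogonality of monomials on a Reinhardt domain, then read off (i), (ii), (iv) from the lattice geometry; your choice of witnesses for (i)--(ii) is essentially the paper's ($z_1^{k}z_2^{k+2}\mapsto c\,z_1^{k+2}z_2^{k+2}$, giving infinitely many orthogonal monomials in the range). For (iii) you take a genuinely simpler route: since $|\phi|\equiv 1$, multiplication by $\phi$ is an $L^2(\D_m)$-isometry and $\B_{\D_m}$ is an orthogonal projection, so $\|T_\phi f\|\le\|\phi f\|=\|f\|$. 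That one-line argument is correct (it is the standard bound $\|T_u\|\le\|u\|_\infty$), whereas the paper instead expands $g\in A^2(\D_m)$ in monomials and proves a uniform bound on the weight ratios $c^2_{(\alpha_1+1,\alpha_2)}/c^2_{(\alpha_1+2,\alpha_2)}$ by splitting the integrals over $X$, $Y_m$, $Z$; your argument is cleaner, the paper's yields explicit control of the shift weights that boundedness alone does not require.

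The gap sits exactly at the step you deferred as ``mildly delicate'' and never carried out: the count that is supposed to produce the degree $\lfloor m/4\rfloor$. From your (correct) characterization $T_\phi^k z^\alpha\ne 0$ if and only if $d:=\alpha_2-\alpha_1\ge 2k$, the nilpotency degree is the least $k$ with $2k>d_{\max}$, where $d_{\max}$ is the largest integer strictly below $\frac{m-1}{2}$; computing this gives degree $\lfloor\frac{m+2}{4}\rfloor$, which agrees with $\lfloor\frac{m}{4}\rfloor$ only when $m\equiv 0,1\pmod 4$. Concretely, for $m=6$ the admissible differences are $d\in\{0,1,2\}$, so $T_\phi(z_2^2)=c\,z_1^2z_2^2\ne 0$ while $T_\phi^2\equiv 0$: the degree is $2$, not $\lfloor 6/4\rfloor=1$. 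So your closing assertion is not merely unproved; it is false for $m\equiv 2,3\pmod 4$, and no amount of careful counting will rescue the stated constant. You are in good company: the paper's own proof establishes only the dichotomy $T_\phi^k\not\equiv 0$ iff $2k<r$ (with $r=\frac{m}{2}$ or $\frac{m-1}{2}$), which yields degree $\lceil \frac{r}{2}\rceil=\lfloor\frac{m+2}{4}\rfloor$, and it never reconciles this with the $\lfloor\frac{m}{4}\rfloor$ in the statement of the proposition. The honest fix, for your write-up and for the statement itself, is to state the degree as $\lfloor\frac{m+2}{4}\rfloor$, or to restrict to $m\equiv 0,1\pmod 4$ if one insists on the formula $\lfloor\frac{m}{4}\rfloor$.
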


\begin{remark}
Once we prove Proposition \ref{proposition}, we immediately obtain Theorem \ref{theorem}. However, it will be clear in the proof that the domain and the operator we present aren't unique but part of a family of domains and operators. We leave exploration of more examples to the reader.
\end{remark}

Before we start proving Proposition \ref{proposition}, we define the following lattice for $m\geq 6$.
\begin{align*}
R_m&=\left\{(\alpha_1,\alpha_2)\in \n^2\ |\ \alpha_2\geq\alpha_1>\alpha_2-\frac{m-1}{2}\right\}\\
&=\left\{(\alpha_1,\alpha_2)\in \n^2\ |\ \alpha_1+\frac{m-1}{2}>\alpha_2\geq\alpha_1\right\}.
\end{align*}

\begin{remark}\label{remark on Rm}
Shifting $\alpha_1$ to the right by a number $s$ greater than or equal to $\frac{m-1}{2}$ is enough to put the resulting index $(\alpha_1+s,\alpha_2)$ out of $R_m$.
That is, if $(\alpha_1,\alpha_2)\in R_m$ then for $s\geq \frac{m-1}{2}$ we get $\left(\alpha_1+s,\alpha_2\right)\not \in R_m$.
\end{remark}

For a multi-index $\gamma=(\gamma_1,\ldots,\gamma_n)\in \n^{n}$, we set 
$c_{\gamma}^{2}=\int\limits_{\D}\abs{z^{\gamma}}^{2}dV(z).$
Then, on a radially symmetric domain $\D$ that contains the origin, the set (or a subset of) $\left\{\frac{z^{\gamma}}{c_{\gamma}}\right\}_{\gamma\in \n^{n}}$ gives a complete orthonormal basis for $A^{2}(\D)$. Each $f\in A^{2}(\D)$ can be written in the form $f(z)=\sum\limits_{\gamma}f_{\gamma}\frac{z^{\gamma}}{c_{\gamma}}$ where the sum converges in $A^{2}(\D)$, but also uniformly on compact subset of $\D$. For the coefficients $f_{\gamma}$, we have $f_{\gamma}= \left\langle f(z),\frac{z^{\gamma}}{c_{\gamma}}\right\rangle_{\D}$.

\begin{proof}[Proof of Proposition \ref{proposition}]
Consider $T_{\phi}$ on $A^2(\D_m)$ for $m\geq 6$. 
$\D_m$ is a radially symmetric domain and the monomials with exponents that reside in $R_m$  form a complete system for $A^2(\D_m
)$. By using the orthogonality of monomials we get
\begin{align}\label{survival eq}
T_{\phi}(z^{\alpha})=\B_{\D_m}\left(\frac{z_1}{\zb_1}\cdot z^{\alpha}\right)&=
\sum\limits_{\gamma\in R_m}\left\langle \frac{z_1}{\zb_1}z^{\alpha},\frac{z^{\gamma}}{c_{\gamma}}\right\rangle \frac{z^{\gamma}}{c_{\gamma}}\\
\nonumber&=\frac{c_{(\alpha_1+1,\alpha_2)}^2}{c_{(\alpha_1+2,\alpha_2)}^2}z_1^{\alpha_1+2}z_2^{\alpha_2}.
\end{align}
On the above summation only $(\gamma_1,\gamma_2)=(\alpha_1+2,\alpha_2)$ survives. Moreover, there exists multi-indices $(\alpha_1,\alpha_2)$ in $R_m$ such that $(\alpha_1+2,\alpha_2)$ is also in $R_m$. Therefore, there exists $z^{\alpha}\in A^2(\D_m)$ such that $T_{\phi}(z^{\alpha})=\frac{c_{(\alpha_1+1,\alpha_2)}^2}{c_{(\alpha_1+2,\alpha_2)}^2}z_1^{\alpha_1+2}z_2^{\alpha_2}\in A^2(\D_m)$  and $T_{\phi}$ is a non-zero operator. 


For $m\geq 6$ and $k\in\mathbb{N}$, $z_1^{k}z_2^{k+2}\in A^2(\D_m)$ and $T_{\phi}\left(z_1^{k}z_2^{k+2}\right)=\frac{c_{(k+1,k+2)}^2}{c_{(k+2,k+2)}^2}z_1^{k+2}z_2^{k+2}\in A^2(\D_m)$.
Hence, the range of the operator $T_{\phi}$ contains all the monomials of the form $z_1^{k+2}z_2^{k+2}$
and so the range of $T_{\phi}$ is infinite dimensional.\medskip

If $g(z_1,z_2)\in A^2(\D_m)$ then its series expansion will be 
\[g(z_1,z_2)=\sum\limits_{\alpha_1=0}^{\infty}\sum\limits_{\alpha_2=\alpha_1}^{\alpha_1+r-1}g_{\alpha_1\alpha_2}\frac{z_2^{\alpha_2}z_1^{\alpha_1}}{c_{(\alpha_1,\alpha_2)}}=\sum\limits_{\alpha_1=0}^{\infty}\sum\limits_{\alpha_2=\alpha_1}^{\alpha_1+r-1} \left\langle g(z),\frac{z^{\alpha}}{c_{\alpha}}\right\rangle \frac{z^{\alpha}}{c_{\alpha}},\] 
where 
\[r =
\begin{cases}
\frac{m}{2}, & \text{if }m\text{ is even}\\
\frac{m-1}{2}, & \text{if }m\text{ is odd}
\end{cases}.\]
The norm of $g(z_1,z_2)$
is given by
\begin{align}\label{norm of g}
\left\|g\right\|_{A^2(\D_m)}^2
=\sum\limits_{\alpha_1=0}^{\infty}\sum\limits_{\alpha_2=\alpha_1}^{\alpha_1+r-1} \left|g_{\alpha_1\alpha_2}\right|^2
\end{align}
and the norm of $T_{\phi}(g)$ is
\begin{align}\label{norm of Tg}
\left\|T_{\phi}(g)\right\|_{A^2(\D_m)}^2&=\left\|\sum\limits_{\alpha_1=0}^{\infty}\sum\limits_{\alpha_2=\alpha_1}^{\alpha_1+r-1} \left\langle\frac{z_1}{\zb_1}\cdot g(z),\frac{z^{\alpha}}{c_{\alpha}}\right\rangle \frac{z^{\alpha}}{c_{\alpha}}\right\|^2\\
\nonumber&=\sum\limits_{\alpha_1=0}^{\infty}\sum\limits_{\alpha_2=\alpha_1}^{\alpha_1+r-1} \left|\left\langle \frac{z_1}{\zb_1}\cdot g(z),\frac{z^{\alpha}}{c_{\alpha}}\right\rangle\right|^2=\sum\limits_{\alpha_1=0}^{\infty}\sum\limits_{\alpha_2=\alpha_1}^{\alpha_1+r-1} \left|\left\langle \frac{z_1}{\zb_1}\cdot \sum_{\beta}g_{\beta}\frac{z^{\beta}}{c_{\beta}},\frac{z^{\alpha}}{c_{\alpha}}\right\rangle\right|^2\\
\nonumber&=\sum\limits_{\alpha_1=2}^{\infty}\sum\limits_{\alpha_2=\alpha_1}^{\alpha_1+r-1} \left|\left\langle \frac{z_1}{\zb_1}\cdot g_{(\alpha_1-2,\alpha_2)}z_1^{\alpha_1-2}z_2^{\alpha_2},\frac{z^{\alpha}}{c_{\alpha}}\right\rangle\right|^2 ~\text{ by orthogonality of monomials}\\
\nonumber&=\sum\limits_{\alpha_1=2}^{\infty}\sum\limits_{\alpha_2=\alpha_1}^{\alpha_1+r-1} \left|g_{(\alpha_1-2,\alpha_2)}\frac{c_{(\alpha_1-1,\alpha_2)}}{c_{\alpha}}
\right|^2~\text{ then we shift the indices}\\
\nonumber&=\sum\limits_{\alpha_1=0}^{\infty}\sum\limits_{\alpha_2=\alpha_1}^{\alpha_1+r-1} \left|\widetilde{g}_{\alpha_1\alpha_2}\right|^2,
\end{align}
where 
\begin{align*}
 \widetilde{g}_{\alpha_1\alpha_2}=
\begin{cases}
0, & \text{if }\alpha_1=\alpha_2 \text{ or }\alpha_1=\alpha_2+1\\
\frac{c_{(\alpha_1+1,\alpha_2)}}{c_{(\alpha_1+2,\alpha_2)}}g_{\alpha_1\alpha_2}, & \text{ otherwise }
\end{cases}.
\end{align*}
The ratio $\frac{c_{(\alpha_1+1,\alpha_2)}^2}{c_{(\alpha_1+2,\alpha_2)}^2}$ is uniformly bounded by a constant. 
Indeed, each integral on $X$ and $Y_m$ has a uniform bound from above (say $C_X$ and $C_{Y_m}$) because of the conditions \eqref{condition1} and \eqref{condition2}. 
Furthermore, we compute the integrals on the polydisc $Z$ explicitly and estimate as follows
\begin{align}
\frac{c_{(\alpha_1+1,\alpha_2)}^2}{c_{(\alpha_1+2,\alpha_2)}^2}
\leq\frac{C_X+C_{Y_m}
+\pi\frac{e^{2\alpha_1+4}}{\alpha_1+2} \cdot \pi\frac{2^{2\alpha_2+2}}{\alpha_2+1}}
{\pi\frac{e^{2\alpha_1+2}}{\alpha_1+1} \cdot \pi\frac{2^{2\alpha_2+2}}{\alpha_2+1}}
\leq \frac{C_X+C_{Y_m}}{\pi^2}+e^2=C.
\end{align}
This estimate implies 
\begin{align}\label{norm of Tg_2}
\left|\widetilde{g}_{\alpha_1\alpha_2}\right|^2\leq C\cdot\left|g_{\alpha_1\alpha_2}\right|^2,\ \ \text{ for all }\ \  (\alpha_1,\alpha_2)\in R_m.
\end{align}
Thus, from \eqref{norm of g}, \eqref{norm of Tg}, and \eqref{norm of Tg_2} it follows that 
\[\left\|T_{\phi}(g)\right\|_{A^2(\D_m)}^2\leq C\cdot \left\|g\right\|_{A^2(\D_m)}^2.\]

Finally, we calculate the powers of $T_{\phi}$.
\begin{align}\label{survival eq 2}
T_{\phi}^2(z^{\alpha})&=T_{\phi}\cdot T_{\phi}(z^{\alpha})
=T_{\phi}\left(\frac{c_{(\alpha_1+1,\alpha_2)}^2}{c_{(\alpha_1+2,\alpha_2)}^2}z_1^{\alpha_1+2}z_2^{\alpha_2}\right)\\
\nonumber &=\frac{c_{(\alpha_1+1,\alpha_2)}^2}{c_{(\alpha_1+2,\alpha_2)}^2} \cdot \frac{c_{(\alpha_1+3,\alpha_2)}^2}{c_{(\alpha_1+4,\alpha_2)}^2} z_1^{\alpha_1+4}z_2^{\alpha_2}.
\end{align}
As for the third power, 
\begin{align}\label{survival eq 3}
T_{\phi}^3(z^{\alpha})=\frac{c_{(\alpha_1+1,\alpha_2)}^2}{c_{(\alpha_1+2,\alpha_2)}^2} \cdot \frac{c_{(\alpha_1+3,\alpha_2)}^2}{c_{(\alpha_1+4,\alpha_2)}^2}\cdot \frac{c_{(\alpha_1+5,\alpha_2)}^2}{c_{(\alpha_1+6,\alpha_2)}^2} z_1^{\alpha_1+6}z_2^{\alpha_2}.
\end{align}
Continuing in that fashion, the $k^{\text{th}}$ power of the operator is
\begin{align}\label{survival eq 4}
T_{\phi}^k(z^{\alpha})=\frac{c_{(\alpha_1+1,\alpha_2)}^2}{c_{(\alpha_1+2,\alpha_2)}^2} \cdot \frac{c_{(\alpha_1+3,\alpha_2)}^2}{c_{(\alpha_1+4,\alpha_2)}^2} \cdots \frac{c_{(\alpha_1+2k-1,\alpha_2)}^2}{c_{(\alpha_1+2k,\alpha_2)}^2} z_1^{\alpha_1+2k}z_2^{\alpha_2}.
\end{align}\\
\\
In \eqref{survival eq 4}, if $2k<r$ then there exists $(\alpha_1,\alpha_2)\in R_m$ such that $(\alpha_1+2k,\alpha_2) \in R_m$, see the discussion in Remark \ref{remark on Rm}, so $z_1^{\alpha_1+2k}z_2^{\alpha_2}\in A^2(\D_m)$ and $T_{\phi}^k\not\equiv 0$ on $A^2(\D_m)$.\\
\\
However, in \eqref{survival eq 4}, if $2k\geq r$ then for all $(\alpha_1,\alpha_2)\in R_m$ we have $(\alpha_1+2k,\alpha_2)\not \in R_m$ by Remark \ref{remark on Rm}, so we see that $z_1^{\alpha_1+2k}z_2^{\alpha_2}\not\in A^2(\D_m)$ and $T_{\phi}^k\equiv 0$ on $A^2(\D_m)$. That is, $T_{\phi}$ is a nilpotent operator of degree $k$ on $A^2(\D_m)$.

\end{proof}


We illustrate the main arguments of the proof in the following example. 

\begin{example}\label{example} 
Set $m=9$, then the monomial $z_1^{\alpha_1}z_2^{\alpha_2}$ is in $A^2(\D_9)$ if and only if $\alpha_1+4>\alpha_2\geq\alpha_1$. The exponents of the monomial in $A^2(\D_9)$ 
are marked on the lattice below in Figure \ref{Fig2}.

\begin{figure}[ht!]
\centering
\includegraphics[width=70mm]{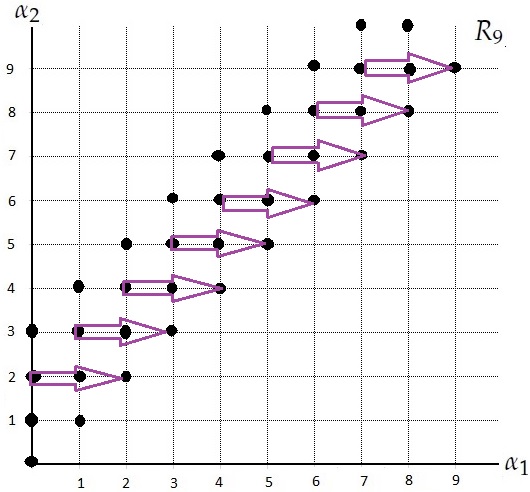}
\caption{Representation of the lattice $R_m$ for $m=9$ and the action of $T_{\phi}$ on $R_m$.}
\label{Fig2}
\end{figure}
It can be noted that $T_{\phi}$ acts like a shift on the lattice, it takes $(\alpha_1,\alpha_2+2)$ to $(\alpha_1+2,\alpha_2+2)$. Thus,  if $T_{\phi}$ is applied on any monomial two times then the exponent of the monomial runs out of the lattice $R_9$. That is, if $z_1^{\alpha_1}z_2^{\alpha_2}\in A^2(\D_9)$ then $T_{\phi}\cdot T_{\phi}(z_1^{\alpha_1}z_2^{\alpha_2})=\frac{c_{(\alpha_1+1,\alpha_2)}^2}{c_{(\alpha_1+2,\alpha_2)}^2} \cdot \frac{c_{(\alpha_1+3,\alpha_2)}^2}{c_{(\alpha_1+4,\alpha_2)}^2} z_1^{\alpha_1+4}z_2^{\alpha_2}\not\in A^2(\D_9)$ and so $T_{\phi}^2\equiv 0$ on $A^2(\D_9)$. 
\end{example}

\section{Acknowledgment} 

We thank the referees for constructive comments on Proposition \ref{proposition} and also for the useful editorial remarks on the exposition of the article.

\bibliographystyle{amsalpha}
\bibliography{CelikZeytuncuBib}
\end{document}